\numberwithin{equation}{section}
\theoremstyle{definition}
\newtheorem{definition}{Definition}[section]
\newtheorem{example}[definition]{Example}
\newtheorem{remark}[definition]{Remark}
\theoremstyle{plain}
\newtheorem{theorem}[definition]{Theorem}
\newtheorem{lemma}[definition]{Lemma}
\newtheorem{result}[definition]{Result}
\newtheorem{corollary}[definition]{Corollary}
\newtheorem{problem}[definition]{Problem}
\newcommand{\eps}{\varepsilon}
\newcommand{\rl}{{\sf Re}}
\begin{document}
	\title[On Spirallike Circularlike domain]{On Hyperbolicity of Spirallike Circularlike domain}
	\author{Sanjoy Chatterjee and Golam Mostafa Mondal}
	\address{Department of Mathematics and Statistics, Indian Institute of Science Education and Research Kolkata,
		Mohanpur -- 741 246}
	\email{ramvivsar@gmail.com}
	
	\address{Department of Mathematics, Indian Institute of Science, Bangalore-- 560012, India}
	
	\email{golammostafaa@gmail.com, golammondal@iisc.ac.in}
	
	\thanks{}
	\keywords{Kobayashi hyperbolic; Brody hyperbolic; Core of a domain; Spirallike domain; Circularlike domain}
	\subjclass[2020]{Primary: 32F45; 32H02; 32Q02}

	\date{\today}

	\begin{abstract}
	In this paper, we prove that a spirallike circularlike domain is Kobayashi hyperbolic if and only if its core is empty. In particular, we show that such a domain is Kobayashi hyperbolic if and only if it is (biholomorphic to) a bounded domain. We also propose a problem in this area.
	
 
 \end{abstract}

	\maketitle
	
	\section{Introduction}
The aim of this paper is to establish a necessary and sufficient condition for certain classes of domains to be Kobayashi hyperbolic, based on the core of the domain. Given a domain $D \subset \mathbb{C}^d$, let $C_{D} : D \times D \rightarrow [0, \infty)$ denote the Carath$\acute{e}$odory pseudo-distance, and let $K_{D} : D \times D \rightarrow [0, \infty)$ denote the Kobayashi pseudo-distance (see \cite{JPbook2013} for the definition). We say that $D$ is Kobayashi (resp. Carath$\acute{e}$odory) hyperbolic if $K_{D}$ (resp. $C_{D}$) is an actual distance. Note that not every Kobayashi hyperbolic domain admits a bounded representation, i.e., not all Kobayashi hyperbolic domains are biholomorphic to bounded domains. One of the obstacles for the existence of such representations is the existence of core of the domain. The concept of the core, denoted as $\mathfrak{c}(D)$, for a domain $D \subset \mathbb{C}^d$ (or more generally, for a domain in a complex manifold $M$), was introduced and extensively studied in Harz, Shcherbina and Tomassini \cite{HaShTo2017,HaShTo2020}, and Poletky and Shcherbina \cite{PoSh2019}. It can be defined as follows:
	
	\begin{definition}\label{Def:Core_Smooth}
		Let $M$ be a complex manifold, and $D \subset M$ be a domain. Then, the set 
		\begin{align*}
			\mathfrak{c}(D) := \{\zeta \in D : \textit{every continuous plurisubharmonic function on $D$ that is bounded}\\{ \textit{from above fails to be strictly plurisubharmonic at }} \zeta\}
	\end{align*}
		is called the core of $D$.
	\end{definition}
	\noindent 
Consider a domain $D$ in $\mathbb{C}^d$. If $\mathfrak{c}(D)$ is empty, then every holomorphic mapping from $\mathbb{C}$ into $D$ is constant (such a domain is called \textit{Brody hyperbolic}). However, the converse is not always true in general, because there exists an unbounded Kobayashi hyperbolic domain with non-empty core (see \cite{GauShcher2021}). Again, if $D$ is bounded, then since $z\to \|z\|^2$ is a bounded strictly plurisubharmonic function on $D$, $\mathfrak{c}(D)$ is empty. By biholomorphic invariance of $\mathfrak{c}(D)$, we can say that $\mathfrak{c}(D)$ is empty if it is biholomorphic to a bounded domain. However, 
there exists an unbounded Kobayashi hyperbolic domain with an empty core which does not have a bounded realization (see \Cref{Exam:Core_empty}). 
In this paper, we study certain classes of domains in $\mathbb{C}^d$ for which the converse holds in each of the above cases.


	\smallskip	
	
In \cite{Kod1982}, Kodama proved that a starlike circular domain in $\mathbb{C}^d$ is hyperbolic if and only if it is bounded. This present paper extends this result to a wide class of domains in $\mathbb{C}^d$. In particular, we show that the same conclusion can be made for spirallike circularlike domains in $\mathbb{C}^{d}$.
We need the following definitions to state our result.

\smallskip
\par A holomorphic vector field $V$ on a domain  $D\subseteq \mathbb{C}^{d}$ is a real vector field on $D$  such that  $$V(z)=\sum_{i=1}^{d} a_{i}(z) \frac{\partial }{\partial  x_{i}}+b_{i}(z) \frac{\partial }{\partial  y_{i}},$$ where $(a_j(z)+ib_{j}(z))$ is holomorphic function on $D$  for all $j \in \{1,2, \cdots ,d\}$, and  $z_{j}=x_{j}+iy_{j}$ are coordinates of $\mathbb{C}^{d}.$ Any holomorphic map $f:\mathbb{C}^d \to \mathbb{C}^d$ can be thought as a holomorphic vector field. The set of all holomorphic vector fields on $D$ is denoted by $ \mathfrak{X}_{\mathcal{O}}(D)$. A point $z_{0}\in D$ is said to be an equilibrium point of the vector field $V$ if $V(z_{0})=0$. A holomorphic system of differential equations associated to $V \in \mathfrak{X}_{\mathcal{O}}(\mathbb{C}^d) $ is as follows:
\begin{align}\label{E:diffeq1}
		\dot{x}(z)&=\widetilde{V}(x(z))\notag\\
		x(0)&=z_{0},
	\end{align}
	 where $z=t+is$,     and $\widetilde{V}(z)=\sum_{j=1}^{d}(a_{j}(z)+ib_{j}(z))\frac{\partial}{\partial z_{j}}$.

A vector field $V \in \mathfrak{X}_{\mathcal{O}}(\mathbb{C}^d) $ is said to be complete if the solution of the system \eqref{E:diffeq1} exists for all $z_{0} \in \mathbb{C}^d$ and  $t \in \mathbb{R}$. It follows from \cite[Theorem]{AMR2000} that the solution of the system \eqref{E:diffeq1} exists for all complex time.  Let the origin be an equilibrium point of the vector field $V$. The origin is said to be {\em globally asymptotically stable } of the system \eqref{E:diffeq1} if for every  $\eps>0$ there exists $\delta>0$ such that $X_{t}(B(0,\delta)) \subset B(0,\eps)$ for all $t \geq 0$ and $\lim_{t \to \infty}X(t,z)=0$ for all $z \in \mathbb{C}^d$, where $X(t,z)$ denotes the solution of the system \eqref{E:diffeq1} with the initial condition $X(0)=z$. It is clear from the definition that if the origin is a globally asymptotic equilibrium point of the vector field, then it is unique. If a complete holomorphic vector field admits a globally asymptotically stable equilibrium point, then we say that the vector field is globally asymptotically stable.

\begin{definition}
Let $V \in \mathfrak{X}_{\mathcal{O}}(\mathbb{C}^d)$ be a complete vector field. A domain $D \subset \mathbb{C}^d$ is said to be \textit{spirallike} with respect to the vector field $V$ if $X_{t}(D) \subset D$ for all $t \geq 0$. We say $D$ is \textit{circularlike} with respect to the vector field $V$ if $X_{is}(D) \subseteq D$ for $s \in \left(-\frac{\pi}{2}, \frac{\pi}{2}\right)$.
\end{definition}

\smallskip

 \par Before stating our main result, let us first mention an example of a Kobayashi hyperbolic unbounded domain with an empty core that does not have any bounded realization. This example can be found in \cite[Example 3.9]{Poletsky2020}. For completeness, we include it here.

\begin{example}\label{Exam:Core_empty}

For a regular compact set $K$ in $\mathbb{C}$ with Hausdorff measure $H_{1}(K) = 0$ but $H_{\frac{1}{2}}(K) >0$, consider the domain $D = \mathbb{C} \setminus K$. The compact set $K$ is non-polar, and hence from \cite[Section 4.1]{Poletsky2020}, it follows that the domain $D$ is hyperconvex. Then, by \cite[Remark (ii), Page 604]{NikPflug2005}, the Green function of $D$ is continuous. Also, $D$ is a Greenian complex manifold. Therefore, the pluricomplex Green function $g_{D}$ satisfies the following (see \cite[Section 3]{Poletsky2020}): For any point $w_0 \in D$, there exist constants $C_1$ and $C_2$ such that the negative continuous plurisubharmonic function $g_{D}$ satisfies $\log |z - w_0| + C_1 < g_{D}(z,w_{0}) < \log |z - w_0| + C_2$ near $w_0$. Consequently, by \cite[Theorem 3.1]{Poletsky2020} (see also \cite[Theorem 2]{PoSh2019}), $\mathfrak{c}(D)$ becomes empty. Again, any bounded holomorphic function on $D$ extends holomorphically to $\mathbb{C}$ and, consequently, is constant. Thus, $D$ is not Carath$\acute{e}$odory hyperbolic; in particular, $D$ is not biholomorphic to any bounded domain in $\mathbb{C}$.

\end{example}

\smallskip

\par In this paper, we prove the following result, which gives a new class of domains for which the existence of a non-trivial core is the only obstruction for a bounded representation.

\begin{theorem}\label{T:Hyperbolic_Bounded}
		Let $D \subset \mathbb{C}^d$ be a domain containing the origin which is circularlike spirallike with respect to a complete, globally asymptotically stable vector field $V \in \mathfrak{X}_{\mathcal{O}}(\mathbb{C}^d)$ with $V(0)=0$. Then the following are equivalent:
		\begin{enumerate}[{\bf H(1)}]
			\item $D$ is bounded.
			\item $D$ is biholomorphic to a bounded domain.
			\item $D$ is Kobayashi hyperbolic.
			\item The core $\mathfrak{c}(D)$ of $D$  is empty.
     \item $D$ is Carath$\acute{e}$odory hyperbolic.
		\end{enumerate} 
	\end{theorem}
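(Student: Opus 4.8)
The plan is to prove the cycle of implications $\mathbf{H(1)} \Rightarrow \mathbf{H(5)} \Rightarrow \mathbf{H(3)} \Rightarrow \mathbf{H(4)} \Rightarrow \mathbf{H(1)}$, together with $\mathbf{H(1)} \Rightarrow \mathbf{H(2)}$ and $\mathbf{H(2)} \Rightarrow \mathbf{H(3)}$, which are immediate. The implications $\mathbf{H(1)} \Rightarrow \mathbf{H(5)} \Rightarrow \mathbf{H(3)} \Rightarrow \mathbf{H(4)}$ are all standard facts with no use of the spirallike/circularlike hypothesis: a bounded domain is Carathéodory hyperbolic (use the coordinate functions), Carathéodory hyperbolicity implies Kobayashi hyperbolicity since $C_D \le K_D$, and Kobayashi hyperbolicity forces an empty core because the existence of a point where some bounded continuous plurisubharmonic function is strictly plurisubharmonic is incompatible with... more carefully: if $\mathfrak c(D) \neq D$ then $D$ carries a bounded-above continuous strictly plurisubharmonic function near some point, but actually the clean route is $\mathbf{H(3)} \Rightarrow \mathbf{H(4)}$ via the fact that a Kobayashi hyperbolic domain admits, near each point, a bounded strictly plurisubharmonic function (local peak/Sibony-type construction), or simply cite that Brody hyperbolicity implies empty core as noted in the introduction together with the fact that Kobayashi hyperbolic implies Brody hyperbolic. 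So the entire content of the theorem is concentrated in the single implication $\mathbf{H(4)} \Rightarrow \mathbf{H(1)}$: \emph{if the core of $D$ is empty, then $D$ is bounded}.

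For $\mathbf{H(4)} \Rightarrow \mathbf{H(1)}$, I would argue by contraposition: assuming $D$ is unbounded, I will produce a nonconstant bounded-above continuous plurisubharmonic function on $D$ that is plurisubharmonic but can be taken strictly plurisubharmonic nowhere — i.e. show $\mathfrak c(D) = D$, or at least $\mathfrak c(D) \ne \emptyset$. The key structural input is that the flow $X_t$ of $V$ satisfies $X_t(D) \subset D$ for $t \ge 0$ with $X_t(z) \to 0$ as $t \to \infty$ for every $z$ (global asymptotic stability), and that $D$ is also invariant under the "rotation" flow $X_{is}$ for $|s| < \pi/2$. Combining these, the full holomorphic flow $X_w$ for $w = t + is$ with $t \ge 0$, $|s| < \pi/2$ maps $D$ into $D$; this is the holomorphic analogue of the closed right half-plane (or a sector) acting on $D$. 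The plan is: (i) use the linearization of $V$ at the origin — since $0$ is a globally asymptotically stable equilibrium of a complete holomorphic vector field, a Poincaré–Dulac / linearization argument (as in Kodama's original treatment and in the work on spirallike domains, e.g. the results one gets from completeness plus the spectrum of $DV(0)$ lying in the open left half-plane) shows $V$ is holomorphically conjugate on all of $\mathbb C^d$ to a linear vector field $z \mapsto A z$ with $\mathrm{spec}(A) \subset \{\mathrm{Re}\,\lambda < 0\}$, so after a biholomorphic change of coordinates we may assume $X_w(z) = e^{wA} z$; (ii) in these coordinates, pick an unbounded sequence $z_k \in D$ and use the contracting flow to translate each $z_k$ back toward a fixed compact neighborhood of $0$, extracting from the orbit structure a nonconstant bounded holomorphic (or plurisubharmonic) map; more precisely, define on $D$ a function by taking a suitable limit/envelope built from $\log\|X_{-\tau}(\cdot)\|$-type quantities along the flow, which will be pluriharmonic along orbit directions and hence fail strict plurisubharmonicity everywhere, while being nonconstant and bounded above because $D$ is unbounded. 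The cleanest implementation is probably to show directly that if $D$ is unbounded then $D$ contains an entire curve: the orbit $w \mapsto X_{-w}(z_0)$ extends, via the unboundedness and the half-plane invariance, to a nonconstant holomorphic map $\mathbb C \to D$, contradicting Brody hyperbolicity — which is implied by emptiness of the core as recalled in the introduction.

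The main obstacle I anticipate is step (i)/(ii): making rigorous the passage from "unbounded and flow-invariant under a sector" to "contains a complex line or entire curve". The subtlety is that $X_t(D) \subset D$ only for $t \ge 0$, so orbits run \emph{into} $D$ under the forward (contracting) flow but need not stay in $D$ under the backward flow; one must exploit the circularlike condition $X_{is}(D) \subset D$ to get genuine two-real-dimensional (hence, by analyticity, complex-analytic) families inside $D$, and then use unboundedness to rule out that these families are relatively compact. In Kodama's bounded-circular case this is handled by a normal-families/Montel argument on the circled structure; here the linearization reduces the spirallike circularlike case to essentially that situation, but one must be careful that the conjugating biholomorphism of $\mathbb C^d$ is global (which is exactly what completeness plus asymptotic stability buys, via \cite{AMR2000} and the standard linearization theorem for such vector fields). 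Once the linearization is in place, the remaining argument is a direct adaptation of Kodama's: an unbounded circular-type domain admits a nonconstant bounded-above plurisubharmonic function constant along the circle/spiral orbits — e.g. $z \mapsto \limsup_{t\to\infty}\big(\text{something like } \|X_{-t}z\|\big)$ suitably normalized — witnessing $\mathfrak c(D) \neq \emptyset$, completing the contrapositive.
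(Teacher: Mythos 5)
Your decomposition puts the ``routine'' label exactly where the theorem's real content lies. You claim $\mathbf{H(3)}\Rightarrow\mathbf{H(4)}$ is a standard fact, but ``Kobayashi hyperbolic $\Rightarrow$ empty core'' is false for general domains --- the introduction of the paper explicitly cites an unbounded Kobayashi hyperbolic domain with non-empty core --- and your fallback ``Brody hyperbolicity implies empty core'' is the converse of the true implication (empty core $\Rightarrow$ Brody hyperbolic, not the other way around); a Sibony-type local construction does not produce a \emph{global} bounded strictly plurisubharmonic function either. The soft direction is $\mathbf{H(4)}\Rightarrow\mathbf{H(3)}$, via Shcherbina's results (core empty iff a bounded smooth strictly plurisubharmonic function exists, and such a function forces Kobayashi hyperbolicity). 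Consequently the hard implication is $\mathbf{H(3)}\Rightarrow\mathbf{H(1)}$ (``hyperbolic $\Rightarrow$ bounded''), which is what the paper proves with the spirallike/circularlike structure; your cycle $\mathbf{H(1)}\Rightarrow\mathbf{H(5)}\Rightarrow\mathbf{H(3)}\Rightarrow\mathbf{H(4)}\Rightarrow\mathbf{H(1)}$ needs precisely this step at the point you declared immediate, and nothing in your proposal supplies it.

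The sketch of $\mathbf{H(4)}\Rightarrow\mathbf{H(1)}$ has independent problems. First, global holomorphic linearization of $V$ is not available: resonances obstruct even local linearization, and completeness plus global asymptotic stability gives no conjugation of $V$ to its linear part on all of $\mathbb{C}^d$; the paper never linearizes and works directly with the flow. Second, ``unbounded $\Rightarrow$ $D$ contains a nonconstant entire curve'' is false in this class: there exist unbounded balanced pseudoconvex domains (hence spirallike circularlike for $V(z)=-z$) that are Brody hyperbolic, namely $\{h<1\}$ with $h$ homogeneous plurisubharmonic, positive off the origin, but with infimum $0$ on the unit sphere. Such a domain is unbounded, has non-empty core (consistently with the theorem), yet contains no entire curve, so no contradiction with Brody hyperbolicity can be extracted; moreover the backward orbit $w\mapsto X_{-w}(z_0)$ need not stay in $D$ at all, since $D$ is only invariant under the forward/sector flow. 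Third, exhibiting one bounded-above plurisubharmonic function that is nowhere strictly plurisubharmonic does not show $\mathfrak{c}(D)\neq\emptyset$: the core is defined by a universal quantifier over \emph{all} bounded-above functions, so you would have to rule out the existence of any bounded strictly plurisubharmonic function. By contrast, the paper's argument for the key implication is a Kobayashi-distance estimate: for an unbounded sequence $z_k\in D$ the maps $f_k(\lambda)=X(\lambda,z_k)$ send the right half-plane $\mathbb{H}$ into $D$, the times $t_k$ with $\|X(t_k,z_k)\|=\eps$ must tend to infinity, $K_{\mathbb{H}}(t_k,t_k+\alpha)\to 0$, and the distance-decreasing property together with the fact that the time-$\alpha$ flow pushes the sphere $S_\eps$ into a small ball around $0$ contradicts hyperbolicity; no entire curves and no linearization are involved.
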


\smallskip

Let $A \in GL_{d}(\mathbb{C})$. Consider $A : \mathbb{C}^{d} \to \mathbb{C}^{d}$ as a holomorphic vector field. Then, as a corollary of \Cref{T:Hyperbolic_Bounded}, we obtain the following result:
	
	\begin{corollary}\label{Cor:Spiralike_Matrix}
Let $A$ be a Hurwitz matrix and $D$ be a domain in $\mathbb{C}^{d}$ containing the origin. Assume also that $e^{tA}z\in D$ and $e^{isA}z\in D$ whenever $z\in D$ for all $t\ge 0,$ for all $s\in (-\frac{\pi}{2},\frac{\pi}{2}).$ Then the following are equivalent:
	\begin{enumerate}[{\bf H(1)}]
		\item $D$ is bounded.
		\item $D$ is biholomorphic to a bounded domain.
		\item $D$ is Kobayashi hyperbolic.
		\item The core $\mathfrak{c}(D)$ of $D$  is empty.
   \item $D$ is Carath$\acute{e}$odory hyperbolic.
	\end{enumerate} 	
	\end{corollary}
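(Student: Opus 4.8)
\emph{Proof proposal for \Cref{Cor:Spiralike_Matrix}.} The plan is to derive the corollary directly from \Cref{T:Hyperbolic_Bounded} by checking that the linear map $z\mapsto Az$, regarded as a holomorphic vector field $V\in\mathfrak{X}_{\mathcal{O}}(\mathbb{C}^d)$, satisfies every hypothesis of the theorem; the conclusion is then immediate. Writing $z_j=x_j+iy_j$, the $j$-th component of $Az$ is a $\mathbb{C}$-linear, hence holomorphic, function of $z$, so $V$ is indeed a holomorphic vector field in the sense defined above, with associated complex system $\dot{x}=\widetilde{V}(x)=Ax$. The linear ODE $\dot{x}=Ax$, $x(0)=z_0$, has the solution $X(t,z_0)=e^{tA}z_0$, defined for all $t\in\mathbb{R}$ and in fact for all complex time with $X_{t+is}(z_0)=e^{(t+is)A}z_0$; therefore $V$ is complete, and $V(0)=A\cdot 0=0$, so the origin is an equilibrium point of $V$.

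Next I would verify global asymptotic stability of the origin. Since $A$ is Hurwitz, every eigenvalue of $A$ has strictly negative real part, and a standard estimate gives constants $M\ge 1$, $\alpha>0$ with $\|e^{tA}\|\le Me^{-\alpha t}$ for all $t\ge 0$. In particular $\sup_{t\ge 0}\|e^{tA}\|\le M<\infty$, so given $\eps>0$ the choice $\dl=\eps/(M+1)$ yields $\|X_t(z)\|=\|e^{tA}z\|\le M\|z\|<\eps$ for every $z\in B(0,\dl)$ and every $t\ge 0$; moreover $X(t,z)=e^{tA}z\to 0$ as $t\to\infty$ for all $z\in\mathbb{C}^d$. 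Hence the origin is globally asymptotically stable, and $V$ is a complete, globally asymptotically stable vector field with $V(0)=0$. (This is precisely the classical fact that Hurwitz stability of $A$ is equivalent to global asymptotic stability of the associated linear flow.)

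Finally I would match the invariance hypotheses. The real flow of $V$ is $X_t(z)=e^{tA}z$ and, as noted, the complex-time flow is $X_{is}(z)=e^{isA}z$. Thus the assumptions that $e^{tA}z\in D$ for all $z\in D$, $t\ge 0$, and $e^{isA}z\in D$ for all $z\in D$, $s\in(-\tfrac{\pi}{2},\tfrac{\pi}{2})$, say exactly that $D$ is spirallike and circularlike with respect to $V$. Since $D$ is a domain containing the origin, all hypotheses of \Cref{T:Hyperbolic_Bounded} are met, and the equivalence of $\mathbf{H(1)}$--$\mathbf{H(5)}$ follows at once. There is no genuine obstacle here beyond bookkeeping: the only points worth stating with care are the identification of the linear flow (at both real and complex time) with the matrix exponential, and the standard equivalence ``$A$ Hurwitz $\iff$ origin globally asymptotically stable'' for linear systems.
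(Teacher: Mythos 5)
Your proposal is correct and follows exactly the route the paper intends: the corollary is obtained by instantiating \Cref{T:Hyperbolic_Bounded} with the linear vector field $V(z)=Az$, whose flow is $e^{\lambda A}z$, using the standard fact that a Hurwitz matrix yields a complete, globally asymptotically stable flow, and noting that the stated invariance hypotheses are precisely the spirallike and circularlike conditions. Your write-up simply makes explicit the verifications the paper leaves to the reader.
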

	
\begin{remark}
If we take $A=-I_{d}$ in \Cref{Cor:Spiralike_Matrix}, we obtain a generalization of \cite[Theorem II]{Kod1982}. It is important to note that we do not require the full strength of the circular property of the domain in the case of \cite[Theorem II]{Kod1982}. \Cref{Cor:Spiralike_Matrix} suggests that it is sufficient to have $(e^{i\theta}z_{1}, \cdots, e^{i \theta}z_{d})\in D$ whenever $z\in D$ for $\theta \in (-\frac{\pi}{2}, \frac{\pi}{2})$.
\end{remark}

 \smallskip

In \cite{GalHarHer2017}, the authors considered a new class of plurisubharmonic functions to define a new core of a domain. They considered a slightly larger class of functions defined on a given domain $D \subset \mathbb{C}^d$:
\begin{align*}
    PSH^{'}(D):=\{\phi\in PSH(D): \phi\not\equiv -\infty, v(\phi,.)\equiv 0\},
\end{align*}
where $\text{PSH}(\Omega)$ denotes the family of plurisubharmonic functions in $D$ and $v(\phi,z_{0})$ denotes the Lelong number of $\Phi$ at $z_0$, i.e.
\begin{align*}
   v(\phi,.):= \liminf_{z \to z_0} \frac{\Phi(z)}{\log |z - z_0|}. 
\end{align*}
 
\noindent For the above class of plurisubharmonic functions, they provided the following definition of the core of a domain in $\mathbb{C}^d$:

\begin{definition}\label{Def:Core_Continuous}
Let $D \subset \mathbb{C}^d$ be a domain. Then, the set 
\begin{align*}
\mathfrak{c}^{'}(D) := \{\zeta \in D :\textit{every } \phi\in PSH^{'}(D) \textit{ that is bounded from above } \\ \textit{ fails to be smooth and strictly plurisubharmonic at } \zeta\}
\end{align*}
is called the core of $D$.\end{definition}

\smallskip

It is important to note that although the concept of the core of a domain depends on the smoothness of the class of plurisubharmonic functions, Richberg’s classical theorem (see Theorem I.5.21 in \cite{Richberg1968} or \cite{Demailly}) implies that the condition of having an empty core remains independent of the smoothness class as long as the functions are at least continuous. In view of \Cref{Def:Core_Smooth}, \Cref{R:Core_Empty_Hyperbolic} and \Cref{R:Core_Empty_Psh} establish a direct connection between Kobayashi hyperbolicity and the core of a domain. 
However, considering \Cref{Def:Core_Continuous} of the core of a domain makes the situation more intricate. For instance, there exists a domain $D$ with an empty $\mathfrak{c}^{'}(D)$ but non-empty $\mathfrak{c}(D)$, as seen in \cite{ShcherZhang2021}. Additionally, there exist domains with an empty $\mathfrak{c}^{'}(D)$ but are not Carath$\acute{e}$odory hyperbolic; however, they are Kobayashi hyperbolic, as shown in \cite{ShcherZhang2021}. With the above definition of the core of the domain, we conjecture that the existence of the nontrivial core is the only obstruction for bounded representations of certain classes of domains in $\mathbb{C}^d$.

	\begin{problem}\label{Con:Hyperbolic_Bounded}
Let $D$ be a circularlike and strictly spirallike domain with respect to the globally asymptotically stable vector field $V$ in $\mathbb{C}^d$. Then the following are equivalent:
		\begin{enumerate}[{\bf $H^{'}$(1)}]
			\item $D$ is bounded.
			\item $D$ is biholomorphic to a bounded domain.
			\item $D$ is Kobayashi hyperbolic.
            \item $D$ is Carath$\acute{e}$odory hyperbolic.
            \item The core $\mathfrak{c}^{'}(D)$ of $D$  is empty.
		\end{enumerate} 
	\end{problem}

\noindent \Cref{T:Hyperbolic_Bounded} tells us that $H'(j)$ is true if and only if $H'(j+1)$ is true for $j=1,2,3$. However, we do not know if $H'(5)$ implies any of the above.
	
	\section{Technical Results}\label{S:Technical}

In this section, we collect some basic and known technical results necessary for proving the theorems in this paper.	
	\begin{result}(\cite[Lemma 1]{Shcherbina2021})\label{R:Core_Empty_Psh}
		For a complex manifold M the property that the core c(M) of M is empty
		holds true if and only if there is a bounded smooth strictly plurisubharmonic function
		on M.
	\end{result}

	\begin{result}(\cite[Theorem 2]{Shcherbina2021})\label{R:Core_Empty_Hyperbolic}
		Let M be a complex manifold which has a bounded continuous strictly plurisubharmonic function. Then M is Kobayashi hyperbolic.
	\end{result}
	\begin{result}\cite[Proposition 3.1]{chatterjee2024approximations}\label{R:CG}
	    Let $V \in \mathfrak{X}_{\mathcal{O}}{(\mathbb{C}^{n})}$ be a complete globally asymptotically stable vector field such that $V(0)=0$ and $\Omega$ be a spirallike domain with respect to $V$ containing the origin. Then, for every compact subset $K \subset \mathbb{C}^{n}$, there exists a real number $M_{K}>0$ such that $X(t,z) \in \Omega$ for all $t >M_{K}$ and for all $z \in K$.
	\end{result}
	\begin{lemma}\label{L:Estmt_Ball_Sphere}
		Let $D$ be a Kobayashi hyperbolic domain in $\mathbb{C}^d$ containing the origin. For every $\eps>0$ with $S_{\eps}:=\{z \in D: \|z\|=\eps\}\subset D,$ there exists $\delta_{\eps}>0$ such that
		\begin{align*}
			K_{D}(0,x)\le K_{D}(0,y)~~,\forall {x}\in B_{\delta_\eps}:=\{z\in \mathbb{C}^d:\|z\|\le \delta_{\eps}\}, \forall y\in S_{\eps}.
		\end{align*}
	\end{lemma}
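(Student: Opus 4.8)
The plan is to exploit the continuity and monotonicity behavior of the Kobayashi distance from the origin. Fix $\eps > 0$ with $S_\eps \subset D$. Since $D$ is Kobayashi hyperbolic, $K_D$ is a genuine distance and, by a theorem of Barth, it induces the Euclidean topology on $D$; in particular $K_D(0,\cdot)$ is continuous on $D$. The sphere $S_\eps$ is compact (it is closed and bounded, and contained in $D$), so the continuous function $y \mapsto K_D(0,y)$ attains a minimum on $S_\eps$; call it $m_\eps := \min_{y \in S_\eps} K_D(0,y)$. Because $K_D$ is a distance and $0 \notin S_\eps$, we have $m_\eps > 0$.

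Next I would use continuity of $K_D(0,\cdot)$ at the origin together with $K_D(0,0) = 0$: there exists $\delta_\eps > 0$ such that the Euclidean ball $B_{\delta_\eps} = \{z : \|z\| \le \delta_\eps\}$ is contained in $D$ and $K_D(0,x) < m_\eps$ for all $x \in B_{\delta_\eps}$. Shrinking $\delta_\eps$ if necessary so that $\delta_\eps < \eps$ guarantees $B_{\delta_\eps} \subset D$ is disjoint from $S_\eps$. Then for every $x \in B_{\delta_\eps}$ and every $y \in S_\eps$ we get
\[
K_D(0,x) < m_\eps \le K_D(0,y),
\]
which is the desired inequality (in fact with strict inequality, which is stronger than stated).

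The only points requiring care are: (i) that $S_\eps$ is genuinely compact — this is automatic since it is a closed subset of $\mathbb{C}^d$ that is bounded and, by hypothesis, sits inside $D$; and (ii) that $K_D(0,\cdot)$ is continuous — this is where Kobayashi hyperbolicity is essential, via the fact that on a hyperbolic manifold the Kobayashi distance is continuous and induces the manifold topology (the upper semicontinuity of $K_D$ always holds, and hyperbolicity supplies the matching lower bound near the diagonal). I expect the main obstacle, if any, to be merely invoking the correct form of this topological fact; the remainder is a one-line compactness-plus-continuity argument with no computation involved.
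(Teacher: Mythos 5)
Your proof is correct and follows essentially the same route as the paper: continuity of $K_D(0,\cdot)$ at the origin, positivity of $\inf_{y\in S_\eps}K_D(0,y)$ (which you justify more carefully, via compactness of $S_\eps$ and the distance property coming from hyperbolicity), and then choosing $\delta_\eps$ small enough. No gaps; if anything your write-up fixes the slight circularity in the paper's choice of $\delta_\eps$.
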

	\begin{proof}[Proof of \Cref{L:Estmt_Ball_Sphere}]
		Since $K_{D}$ is continuous, $\lim_{x\to 0} K_{D}(0,x) = 0$. We have  $\max_{B_{\delta_\eps}} K_{D}(0,x) = K_{D}(0,x_{\delta_\eps})$ for some $x_{\delta_\eps}$ with $|x_{\delta_{\eps}}| \le \delta_{\eps}$. Let us define: $R_{\eps} := \inf_{y\in S_{\eps}} K_{D}(0,y) > 0$. Choose $\delta_{\eps} > 0$ such that $K_{D}(0,x_{\delta_\eps}) < R_{\eps}$. Clearly, we get $\sup_{|x|\le \delta_{\eps}} K_{D}(0,x) = K_{D}(0,x_{\delta_\eps}) < R_{\eps} \le K_{D}(0,y)$ for all $y\in S_{\eps}$. Therefore, for all $x\in B_{\delta_\eps}$, we have $K_{D}(0,x) \le K_{D}(0,y)$ for all $y\in S_{\eps}$.
		
	\end{proof}

	\section{Proof of \Cref{T:Hyperbolic_Bounded}}
	\begin{proof}[Proof of \Cref{T:Hyperbolic_Bounded}]
		
	\noindent {\bf H(1)$\iff$ H(3)}:	
		Since every bounded domain is hyperbolic, we just need to show the converse part.
		
		\smallskip
		
		\noindent Let us assume that $D$ is unbounded. Then there exists a sequence of points $\{z_k\}_{k}\subset D$ such that $|z_{k}|>1$ for all $k\in \mathbb{N}.$ Let us consider the set $\mathbb{H}:=\{z\in \mathbb{C}: \rl (z)>0\},$ and  for each $k\in \mathbb{N}$ we define a function $f_{k}:\mathbb{H}\to D$ by 
		\begin{align*}
			f_{k}(\lambda)=X(\lambda,z_{k}).
		\end{align*}
		Clearly, if $f$ is well-defined, then it is a holomorphic function on $\mathbb{H}.$ It is easy to see that the map \( f_k \) is well-defined. In fact, for $\lambda \in \mathbb{H}$, we write $\lambda = t + is$, where $t > 0$ and $s \in (-\frac{\pi}{2}, \frac{\pi}{2})$. Therefore,
		\begin{align*}
			X(\lambda, z_k) = X(t + is, z_k) = X_t \circ X_{is}(z_k).
		\end{align*}
		Since $D$ is circularlike, $X_{is}(z_k) \in D$, and since $D$ is spirallike, $ X_t \circ X_{is}(z_k) \in D$. Therefore, $f_k$ is well-defined.

		\smallskip
		
		\noindent For sufficiently small $\eps \in (0, 1)$, let us consider the sphere
		\begin{align*}
			S_{\eps}:=\{z\in \mathbb{C}^d:\|z\|=\eps\}\subset D.
		\end{align*}
		Note that $|X(0,z_{k})| = |z_{k}| > \eps$. Again, since the vector field is globally asymptotically stable, there exists $T_{k} > 0$ such that $|X(T_{k},z_{k})| < \eps.$ Hence there exists $0<t_{k}<T_k$ such that $|X(t_k,z_k)| = \eps$.

		\smallskip
		
		\noindent Now \textbf{claim} that $t_{k} \to \infty $ as $ k \to \infty$. If not, then there exists $M > 0$ such that $t_{k} < M $ for all $k \in \mathbb{N}$. This implies, there exists $t_{0} > 0,$ and a subsequence $\{k_q\}$ of $\{k\}$ such that $t_{k_{q}} \to t_{0}$ as $q \to \infty$. Now $|z_{k_q}| = |X_{-t_{k_q}} \circ X_{t_{k_q}}(z_{k_q})|$. Since $X_{t_{k_q}}(z_{k_q}) \in S_{\eps}$, therefore, $X_{t_{k_{q_r}}}(z_{k_{q_{r}}}) \to b_{0}$  as $r\to\infty$ for some $b_{0} \in S_{\eps},$ and for some subsequence $\{k_{q_r}\}$ of $\{k_{q}\}.$ Since $ t_{k_q} \to t_{0}$, then $t_{k_{q_r}} \to t_{0}$.
		
		\smallskip
		
		\noindent We now compute:
		
		\begin{align*}
			\lim_{r\to \infty}\left|z_{k_{q_r}}\right|&=\lim_{r\to \infty}\left|X_{-t_{k_{q_r}}}\left(X_{t_{k_{q_r}}}\left(z_{k_{q_r}}\right)\right)\right|\\
			&=|X_{-t_{0}}(b_0)|.   
		\end{align*}
		But $\left|z_{k_{q_r}}\right|\to \infty$ as $r\to \infty.$ Hence this is a contradiction. Therefore $t_{k}\to \infty$ as $k\to \infty,$ which proves our claim.
		
		\smallskip
		
		\noindent Note that we have constructed  a sequence $t_{k}\in (0,\infty),z_{k}\in D$ such that $\left |X_{t_k}(z_k)\right|=\eps$ for all $k$.
		
		\begin{lemma}\label{L:Estimate_Rightplane}
	$\lim_{r\to\infty}K_{\mathbb{H}}(t_{k_{q_r}},t_{k_{q_r}}+\alpha)=0$ for any $\alpha\in \mathbb{R}^{+}.$
		\end{lemma}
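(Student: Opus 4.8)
The claim is about the Kobayashi distance in the right half-plane $\mathbb{H}$, so the plan is to reduce everything to an explicit, well-known formula. First I would recall that $\mathbb{H}$ is biholomorphic to the unit disc $\dsc$, and hence $K_{\mathbb{H}}$ equals the Poincaré distance transported to $\mathbb{H}$; explicitly, for real points $p, q > 0$ one has a closed-form expression $K_{\mathbb{H}}(p,q) = \tfrac{1}{2}\bigl|\log(p/q)\bigr|$ (up to the usual normalization constant), obtained by pulling back via $\lambda \mapsto \tfrac{\lambda-1}{\lambda+1}$ or directly by integrating the infinitesimal metric $\tfrac{|d\lambda|}{2\,\rl(\lambda)}$ along the segment $[q,p] \subset (0,\infty)$.

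Once that formula is in hand, the statement is immediate: writing $p = t_{k_{q_r}}$ and $q = t_{k_{q_r}} + \alpha$ with $\alpha \in \mathbb{R}^+$ fixed, we get
\[
K_{\mathbb{H}}\bigl(t_{k_{q_r}},\, t_{k_{q_r}}+\alpha\bigr) \;=\; \tfrac{1}{2}\,\log\!\left(\frac{t_{k_{q_r}}+\alpha}{t_{k_{q_r}}}\right) \;=\; \tfrac{1}{2}\,\log\!\left(1 + \frac{\alpha}{t_{k_{q_r}}}\right).
\]
Since it was established just before the lemma that $t_k \to \infty$ as $k \to \infty$, the subsequence $t_{k_{q_r}} \to \infty$ as $r \to \infty$, so $\alpha / t_{k_{q_r}} \to 0$ and the right-hand side tends to $0$. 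That finishes the proof.

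Alternatively, if one prefers not to invoke an explicit formula, I would use the distance-decreasing property together with an explicit competitor disc: the map $\phi_r : \dsc \to \mathbb{H}$, $\phi_r(\mu) = t_{k_{q_r}}\cdot \tfrac{1+\mu}{1-\mu}$ (or a suitable affine reparametrization) sends $0$ to $t_{k_{q_r}}$, and solving $\phi_r(\mu_r) = t_{k_{q_r}} + \alpha$ gives $\mu_r = \tfrac{\alpha}{2 t_{k_{q_r}} + \alpha} \to 0$; hence $K_{\mathbb{H}}(t_{k_{q_r}}, t_{k_{q_r}}+\alpha) \le K_{\dsc}(0,\mu_r) \to 0$, and since $K_{\mathbb{H}} \ge 0$ this suffices (we only need the limit to be $0$, so no lower bound is required). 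There is essentially no obstacle here — the only thing to be careful about is the normalization constant in the definition of $K_{\mathbb{H}}$, which is irrelevant for the limit, and the bookkeeping that $t_{k_{q_r}}$ is a subsequence of a sequence already shown to diverge to $+\infty$.
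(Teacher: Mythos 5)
Your proof is correct and follows essentially the same route as the paper: the paper transports the points to the unit disc via $z \mapsto \frac{z-1}{z+1}$ and a disc automorphism to obtain $K_{\mathbb{H}}(t_k, t_k+\alpha) = K_{\mathbb{D}}\bigl(0, \frac{\alpha}{2t_k+\alpha}\bigr) \to 0$, which is exactly the quantity $\mu_r$ in your alternative argument, and your closed-form expression $\frac{1}{2}\log\bigl(1+\frac{\alpha}{t_{k_{q_r}}}\bigr)$ is just this same computation made explicit.
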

		
		We accept this lemma as true at this point and proceed with the proof of the main theorem. Since $D$ is Kobayashi hyperbolic, hence,
		\begin{align*}
			K_{D}(f_{k}(t_k),0)&\le  K_{D}(f_{k}(t_k),f_{k}(t_k+\alpha))+K_{D}(f_{k}(t_k+\alpha),0)\\
			&\le  K_{\mathbb{H}}(t_k,t_k+\alpha)+K_{D}(f_{k}(t_k+\alpha),0).
		\end{align*}
		From above, for the sequence $\{k_{q_r}\}$, we have 
		\begin{align}\label{E:Estimate1}
			K_{D}\left(X\left(t_{k_{q_r}},z_{k_{q_r}}\right),0\right)& \leq  K_{\mathbb{H}}(t_{k_{q_r}},t_{k_{q_r}}+\alpha)+K_{D}(X(t_{k_{q_r}}+\alpha,z_{k_{q_r}}),0).
		\end{align}
		
		\noindent Taking limit $r\to \infty$ in \eqref{E:Estimate1}, we get from \Cref{L:Estimate_Rightplane} that 
		
		\begin{align}\label{E:Estimate2}
			K_{D}(b_0,0) \le \lim_{r\to \infty}K_{D}(X(t_{k_{q_r}}+\alpha,z_{k_{q_r}}),0).
		\end{align}
		
	\noindent	From \Cref{R:CG}, we can  choose $\alpha>0$ such that $X(\alpha,z)\in \overline{B(0,\delta_{\eps})}$ for all $z\in S_{\eps}$ and $\delta_{\eps}>0$ is chosen as \Cref{L:Estmt_Ball_Sphere}. Again, we have $X(t_{k_{q_r}}+\alpha,z_{k_{q_r}})=X(\alpha,X(t_{k_{q_r}},z_{k_{q_r}}))$, and by construction $X(t_{k_{q_r}},z_{k_{q_r}})\in S_{\eps}$. By the choice of $\alpha>0$, we get that
		$X(t_{k_{q_r}}+\alpha,z_{k_{q_r}})\in \overline{B(0,\delta_{\eps})}$. Therefore,
		\begin{align}\label{E:Estimate3}
			w:=\lim_{r \to \infty} X(t_{k_{q_r}}+\alpha,z_{k_{q_r}})\in \overline{B(0,\delta_{\eps})}.
		\end{align}
		\noindent Since $\delta_{\epsilon}>0$ in the above equation as per \Cref{L:Estmt_Ball_Sphere}, then $K_{D}(w,0)\le K_{D}(y,0)$ for all $y\in S_{\epsilon}$. Since $b_{0} \in S_{\epsilon}$, this contradicts \eqref{E:Estimate2}.
		
\medskip

	It remains to prove \Cref{L:Estimate_Rightplane}. Let us consider the biholomorphism $\phi: \mathbb{H} \to \mathbb{D}$ defined by $\phi(z) = \frac{z - 1}{z + 1}$, and let us consider the automorphism of the unit disc $\phi_{a}(z) = \frac{z - a}{1 - \overline{a}z}$ for $a \in \mathbb{D}$. Therefore,
		\begin{align*}
	K_{\mathbb{H}}(t_k,t_{k}+\alpha)
			&= K_{\mathbb{D}}(\phi(t_k),\phi(t_k+\alpha)) \\
			&= K_{\mathbb{D}}\left(\frac{t_k-1}{t_k+1},\frac{t_k+\alpha-1}{t_k+\alpha+1}\right) \\
			&= K_{\mathbb{D}}(\phi_{\alpha_{k}}(\alpha_{k}), \phi_{\alpha_{k}}(\beta_{k})), \\
   \intertext{where $\alpha_{k}=\frac{t_{k}-1}{t_{k}+1}$ and $\beta_{k}=\frac{t_{k}+\alpha-1}{t_{k}+\alpha+1}$. Hence, we get }
		K_{\mathbb{H}}(t_k,t_{k}+\alpha)	&= K_{\mathbb{D}}\left(0,\frac{\alpha}{2t_{k}+\alpha}\right).
		\end{align*}
\noindent
		Taking $k \to \infty$, we get the result.

\smallskip

\noindent {\bf H(3)$\iff$ H(4)}:
If $D$ is Kobayashi hyperbolic, then it is bounded, and thus $\mathfrak{c}(D)$ is empty. Conversely, if $\mathfrak{c}(D)$ is empty, then according to \Cref{R:Core_Empty_Hyperbolic}, there exists a bounded positive strictly plurisubharmonic function on $D$. Therefore, by \Cref{R:Core_Empty_Psh}, $D$ becomes Kobayashi hyperbolic.

\smallskip

\noindent {\bf H(2)$\implies$ H(4)}: Suppose $D$ is biholomorphic to a bounded domain $G$, and let $F:D\to G$ be a biholomorphism. Define $\phi(\xi):=\|F(\xi)\|^2$. Clearly, $\phi$ is a positive bounded above smooth strictly plurisubharmonic function on $D$, and hence $\mathfrak{c}(D)$ becomes empty. This proves the required implication.

\smallskip

\noindent Clearly, ${\bf H(1)\implies H(2)},$ and ${\bf H(5)\implies H(3)\implies H(1)\implies H(5)},$  hence the result is  proved.
   \end{proof}
	
\noindent {\bf Acknowledgements.}  The work of the first-named author is supported by a CSIR fellowship (File No-09/921(0283)/2019-EMR-I). The work of the second-named author is supported by an IoE-IISc Postdoctoral Fellowship (R(HR)(IOE-PDF)(MA)(GMM)-114).

	\bibliographystyle{plain}
	\bibliography{biblio.bib}

\end{document}